\documentclass[12pt,reqno]{amsart}
\usepackage{fullpage}
\usepackage{times}
\usepackage{amsmath,amssymb,amsthm,url}
\usepackage[utf8]{inputenc}
\usepackage[english]{babel}
\usepackage{bbm}
\usepackage{enumerate}
\usepackage{bm}
\usepackage{graphicx}
\usepackage{mathrsfs}
\usepackage[colorlinks=true, pdfstartview=FitH, linkcolor=blue, citecolor=blue, urlcolor=blue]{hyperref}
\usepackage{tikz-cd}
\usepackage{tabstackengine}
\stackMath
\usepackage{comment}
\usepackage[capitalise]{cleveref}

\newtheorem{thm}{Theorem}[section]
\newtheorem{lem}[thm]{Lemma}
\newtheorem{prop}[thm]{Proposition}
\newtheorem{cor}[thm]{Corollary}
\newtheorem{conj}[thm]{Conjecture}
\newtheorem{claim}[thm]{Claim}

\theoremstyle{definition}

\newcommand{\N}{\mathbb{N}}

\newenvironment{poc}{\begin{proof}[Proof of claim]}{\end{proof}}
\title{Multiplicative irreducibility of small perturbations of the set of shifted $k$-th powers}
\author{Chi Hoi Yip}
\address{School of Mathematics\\ Georgia Institute of Technology\\ Atlanta, GA 30332\\ United States}
\email{cyip30@gatech.edu}
\subjclass[2020]{11P70, 11B30, 11D45}
\keywords{multiplicative decomposition, shifted powers}
\begin{document}

\begin{abstract}
Motivated by a conjecture of Erd\H{o}s on the additive irreducibility of small perturbations of the set of squares, recently Hajdu and S\'{a}rk\"{o}zy studied a multiplicative analogue of the conjecture for shifted $k$-th powers. They conjectured that for each $k\geq 2$, if one changes $o(X^{1/k})$ elements of $M_k'=\{x^k+1: x \in \N\}$ up to $X$, then the resulting set cannot be written as a product set $AB$ nontrivially. In this paper, we confirm a more general version of their conjecture for $k\geq 3$. 
\end{abstract} 

\maketitle

\section{Introduction}

Let $\N$ be the set of positive integers.
In this paper, we study questions related to additive decompositions and multiplicative decompositions. A set $S \subset \N$ is said to be \emph{multiplicatively reducible} if it has a multiplicative
decomposition $S=AB=\{ab: a \in A, b \in B\}$, where $A,B$ are subsets of $\N$ with size at least $2$. If $S$ is not multiplicatively reducible, we say that $S$ is \emph{multiplicatively irreducible}. Similarly, we say that $S$ is \emph{additively reducible} if it can be written as a sumset $A+B=\{a+b: a\in A, b\in B\}$, where $A,B$ are subset of $\N$ with size at least $2$. If $S$ is not additively reducible, we say that $S$ is \emph{additively irreducible}. There is a large amount of literature on the study of additive and multiplicative decompositions for sets with different arithmetic structures; we refer to a nice survey by Elsholtz \cite{E09}.

The set of perfect squares is additively irreducible simply because the gap between consecutive squares tends to infinity. Erd\H{o}s conjectured that all small perturbations of the set of squares are still additively irreducible.

\begin{conj}[Erd\H{o}s]\label{conj:Erdos}
If $k\geq 2$ and we change $o(X^{1/2})$ elements of the set of squares up to $X$ (deleting some of its elements and adding some positive integers), then the new set $R$ is always additively irreducible.    
\end{conj}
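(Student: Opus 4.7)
The plan is to argue by contradiction: assume $R = A + B$ with $|A|, |B| \geq 2$, and then derive incompatible upper and lower bounds on $|A|\cdot |B|$. Write $Q = \{n^2 : n \in \N\}$ for the squares and $E = R \triangle Q$ for the exceptional set, so that $|E \cap [1,X]| = o(X^{1/2})$ by hypothesis and in particular $|R \cap [1,X]| = (1+o(1))X^{1/2}$.

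The first step is a pair-of-squares bound. Fix any two elements $a_0 < a_1$ in $A$ and set $d = a_1 - a_0$. For each $b \in B$ with $a_1 + b \leq X$, both $a_0 + b$ and $a_1 + b$ lie in $R$; for all but at most $2|E \cap [1,X]| = o(X^{1/2})$ such $b$, both are actually squares $x^2$ and $y^2$ with $y^2 - x^2 = d$. Since $(y-x)(y+x) = d$ has at most $\tfrac{1}{2}\tau(d)$ positive integer solutions, we obtain
\[
|B| \leq \tfrac{1}{2}\tau(d) + o(X^{1/2}) = o(X^{1/2}).
\]
Exchanging $A$ and $B$ yields $|A| = o(X^{1/2})$. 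Combined with the trivial lower bound $|A|\cdot|B| \geq |R \cap [1,X]| = (1+o(1))X^{1/2}$, this pushes $(A,B)$ into the intermediate regime where both have size of order $X^{1/4}$, and the pair argument alone cannot conclude.

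To collapse this intermediate regime, I would iterate with longer tuples. If $|B| \geq 3$, pick $b_0 < b_1 < b_2$ in $B$ and consider triples $(a+b_0, a+b_1, a+b_2)$ for $a \in A$. Each triple of squares with the prescribed gaps $b_1-b_0$ and $b_2-b_0$ is determined by a divisor decomposition of $b_1-b_0$, giving at most $\tfrac{1}{2}\tau(b_1-b_0) = X^{o(1)}$ such triples. One hopes to parlay this, together with a symmetric argument on $B$ using tuples in $A$, into $|A|,|B| = X^{o(1)}$, and hence into the desired contradiction with $|A|\cdot|B| \gtrsim X^{1/2}$.

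The main obstacle, and presumably the reason the conjecture remains open, is the exceptional bookkeeping. The $o(X^{1/2})$ elements of $E$ can in principle account for $o(X^{1/2})$ exceptional $a \in A$ regardless of how many $b_i$ are used, so a naive union bound over tuples does not beat the pair bound. Ruling this out seems to require genuinely new analytic input: either Fermat-style rigidity for squares in arithmetic progression, inverse sumset methods constraining $A$ and $B$ to be very structured (say, close to arithmetic progressions), or sharp estimates for the number of pairs of squares in a short interval. The hardest case is $|B| = 2$, where no tuple iteration is available and one must directly show that for every fixed $d > 0$ no set $A \subseteq \N$ with $|A \cap [1,X]| \asymp X^{1/2}$ can satisfy $A, A+d \subseteq Q$ up to $o(X^{1/2})$ exceptions --- essentially a quantitative correlation bound between the squares and their translates.
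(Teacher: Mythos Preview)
The statement you are addressing is \emph{not} a theorem of the paper but Erd\H{o}s's conjecture, which the paper explicitly records as open; it quotes the S\'ark\"ozy--Szemer\'edi partial result (with the weaker threshold $o(X^{1/2}/2^{(3+\epsilon)\log X/\log\log X})$) and remarks that the conjecture ``seems to be beyond reach in its original form''. There is therefore no proof in the paper to compare against, and your proposal---as you yourself concede in its final paragraph---is not a proof either, but a sketch of the classical approach together with a diagnosis of where it stalls.

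Your diagnosis of the main obstacle is essentially right. The divisor-counting step gives $|B|\le \tfrac12\tau(d)+2|E\cap[1,X]|$ for $d=a_1-a_0$, and since $d\le X$ forces $\tau(d)=X^{o(1)}$ one obtains $|A|,|B|=o(X^{1/2})$, which is fully compatible with $|A|\cdot|B|\ge |R|\sim X^{1/2}$. Iterating with longer tuples from $B$ does not help, because each application still carries an additive $O(|E|)=o(X^{1/2})$ exception budget that dominates the $X^{o(1)}$ arithmetic saving; this is exactly the mechanism behind the $2^{O(\log X/\log\log X)}$ barrier in the S\'ark\"ozy--Szemer\'edi theorem. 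One minor slip: you write $\tfrac12\tau(d)+o(X^{1/2})=o(X^{1/2})$ as if $\tau(d)$ were an absolute constant, but $d$ depends on the hypothetical decomposition and hence on $X$; the conclusion survives only because $\tau(d)\le X^{o(1)}$, and that should be said.

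There is one genuine error. The case $|B|=2$ is the \emph{easy} case, not the hardest. If $B=\{b_0,b_1\}$ then $|R|\le 2|A|$ forces $|A|\ge(\tfrac12+o(1))X^{1/2}$, while the very divisor argument you already ran (now with the fixed pair taken from $B$, setting $d'=b_1-b_0$) gives $|A|\le\tfrac12\tau(d')+o(X^{1/2})=o(X^{1/2})$; this is an immediate contradiction. The genuinely resistant regime is the intermediate one you mention earlier, where both $|A|$ and $|B|$ are of order roughly $X^{1/4}$, and that is where the conjecture remains open.
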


This conjecture is still open, with the best-known progress due to S\'ark\"ozy and Szemer\'edi~\cite{SS65}, who showed that~\cref{conj:Erdos} holds if one replaces $o(X^{1/2})$ with $o(X^{1/2}/ 2^{(3+\epsilon)\log X/\log \log X})$ for any $\epsilon>0$. We also refer to related results by Bienvenu~\cite{B23} and Leonetti~\cite{L23} with a probabilistic flavor. It appears that the finite field analogue of~\cref{conj:Erdos} is even more challenging (if true); we refer to \cite{HP, S12, Sh14, Y25} for some partial results. 

Recently, Hajdu and S\'{a}rk\"{o}zy studied the multiplicative decompositions of polynomial sequences with integer coefficients in a series of three papers \cite{HS18, HS18b, HS20}. In particular, in the first two papers, they provided a simple proof of the following result: for each $k\geq 2$, if one changes finitely many elements of the set $M_k'=\{x^k+1: x \in \N\}$, then the new set remains multiplicatively irreducible.  We also refer to the study of finite field analogues of the same result in \cite{KYY, S14}.

In the third paper of the series \cite{HS20}, Hajdu and S\'{a}rk\"{o}zy studied the following multiplicative analogue of~\cref{conj:Erdos}. 

\begin{conj}[Hajdu and S\'{a}rk\"{o}zy]\label{conj:main}
If $k\geq 2$ and we change $o(X^{1/k})$ elements of the set $\{x^k+1: x \in \N\}$ up to $X$, then the new set $R$ is always multiplicatively irreducible.    
\end{conj}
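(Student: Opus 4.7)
My plan is to argue by contradiction: suppose $R = AB$ with $|A|, |B| \geq 2$, and use Thue's theorem (available for $k \geq 3$) to contradict the near-identity $R \approx M_k' = \{x^k+1 : x\in\N\}$. Since $|R \cap [1,X]| = X^{1/k} + o(X^{1/k})$, both $A$ and $B$ must be infinite.

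The first main step is to fix two distinct elements $a_1 < a_2$ of $A$. For every $b \in B$, both $a_1 b$ and $a_2 b$ lie in $R$; since $R$ differs from $M_k'$ by only $o(X^{1/k})$ elements up to $X$, for all but $o(X^{1/k})$ values of $b \in B \cap [1, X/a_2]$ one has $a_1 b = x^k + 1$ and $a_2 b = y^k + 1$ simultaneously, yielding
\[
a_2 x^k - a_1 y^k \;=\; a_2 - a_1.
\]
For $k \geq 3$, Thue's theorem (and its quantitative refinements due to Evertse, Bombieri--Schmidt, Mueller--Schmidt, etc.) guarantees that this equation has only finitely many integer solutions. Since $b$ is recovered from $x$ via $b = (x^k+1)/a_1$, this forces $|B \cap [1, Z]| = O_{k,a_1,a_2}(1) + o(Z^{1/k})$; symmetrically, fixing distinct $b_1, b_2 \in B$ gives $|A \cap [1, Z]| = O_{k,b_1,b_2}(1) + o(Z^{1/k})$.

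To convert these sparsity bounds into a contradiction, I would invoke a pointwise congruence constraint: every pair $(a, b) \in A \times B$ with $ab \in M_k'$ corresponds to an $x \leq X^{1/k}$ with $x^k \equiv -1 \pmod{a}$, giving
\[
|B \cap [1, X/a]| \;\leq\; \rho_k(a) \cdot \frac{X^{1/k}}{a} + o(X^{1/k}),
\]
where $\rho_k(a) = \#\{x \bmod a : x^k \equiv -1\}$. Dyadically decomposing $A$ and combining the Thue-based bound $|A \cap [1, Z]| = o(Z^{1/k})$ with the standard estimate $\rho_k(a) \leq a^{o(1)}$, one can control $\sum_{a \in A,\, a \leq X} |B \cap [1, X/a]|$ by partial summation and, after careful bookkeeping, show that it is strictly smaller than $|R \cap [1, X]| = X^{1/k}(1 - o(1))$, giving the contradiction.

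The principal obstacle is making the final summation tight enough to close the gap: the $o$-terms propagate through each step of the argument, and their rates depend on the perturbation rate of $R$ away from $M_k'$, so a careful quantitative treatment is required. The hypothesis $k \geq 3$ is essential, since for $k = 2$ the equation $a_2 x^2 - a_1 y^2 = a_2 - a_1$ is of Pell type and may have infinitely many integer solutions, breaking the central Thue step. The ``more general version'' of \cref{conj:main} alluded to in the abstract most likely arises from quantifying the Thue solution count and the ensuing summation, which should allow sharper perturbation tolerances than $o(X^{1/k})$.
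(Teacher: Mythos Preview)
Your first three steps are sound and in fact coincide with the paper's \emph{final} step: fixing $a_1<a_2\in A$ and invoking Thue (or, as the paper does, a gap principle giving $O(\log\log X)$ solutions) to conclude $B(Z)=o(Z^{1/k})$, and symmetrically $A(Z)=o(Z^{1/k})$. The genuine gap is in step~4. Bounding $|R\cap[1,X]|\le\sum_{a\in A,\,a\le X}B(X/a)$ and splitting each summand into the congruence term $\rho_k(a)X^{1/k}/a$ and the perturbation term does not close: the total contribution from pairs $(a,b)$ with $ab\in S$ is at most $\sum_{s\in S,\,s\le X}d(s)$, and the divisor bound $d(s)\le 2^{(1+o(1))\log X/\log\log X}$ is the best one can do here. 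This forces the perturbation to be $o\bigl(X^{1/k}\exp(-(\log 2+\epsilon)\log X/\log\log X)\bigr)$, which is exactly the Hajdu--S\'ark\"ozy result the paper is improving upon. Your own remark that ``the $o$-terms propagate'' is the crux: for perturbations decaying arbitrarily slowly (say $X^{1/k}/\log\log\log X$), the sum you propose is $\gg X^{1/k}$ and gives no information.

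The paper circumvents this summation entirely. Rather than bounding pairs $(a,b)$ with $ab\in S$, it bounds $Q(X)=|R\cap M\cap[1,X]|$ directly by partitioning $A$ and $B$ into three ranges $[1,X^{1/3}]$, $(X^{1/3},X^{1/2}]$, $(X^{1/2},X]$ and applying the K\H{o}v\'ari--S\'os--Tur\'an theorem to each cross-term, using \emph{absolute} forbidden-subgraph results (no $K_{3,7}$ between $A\cap[1,X^{1/3}]$ and $B\cap(X^{1/2},X]$, no $K_{2,2}$ between the middle ranges, etc.) that come from bipartite Diophantine-tuple bounds and are independent of the perturbation. This yields the dichotomy (Claim~3.1) that at some scale $X$, $X^{1/2}$, or $X^{1/3}$ one of $A,B$ has positive density $\ge\frac{1}{40}(\cdot)^{1/k}$; \emph{then} your Thue step delivers the contradiction. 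Incidentally, the ``more general version'' in the abstract refers to replacing $x^k+1$ by $x^k+n$ for arbitrary nonzero $n$, not to a sharper perturbation tolerance.
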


\cref{conj:main} is best possible in the sense that if one changes a positive proportion of elements from $\{x^k+1: x \in \N\}$ (equivalently, changes
at most $cX^{1/k}$ elements of the set $\{x^k+1: x \in \N\}$ up to $X$ for some constant $c>0$), then the resulting set could be multiplicatively reducible. For example, let $m\geq 2$ be an integer and set $R_m=AB$, where $A=\{1,m\}$ and $B=\{x^k+1: x \in \N\}$; note that $R_m$ is obtained by adding $(m^{-1/k}+o(1))X^k$ elements to $\{x^k+1: x \in \N\}$ up to $X$ and $m^{-1/k} \to 0$ as $m \to \infty$.

Based on a skillful and sophisticated argument involving various tools from Diophantine approximation, Diophantine equations, extremal graph theory, and multiplicative number theory, Hajdu and S\'{a}rk\"{o}zy \cite[Theorem 2.1]{HS20} proved a weaker version of~\cref{conj:main}. More precisely, they proved that \cref{conj:main} holds if $o(X^{1/k})$ is replaced by 
$$
o\bigg(X^{1/k} \exp\bigg(-(\log 2+\epsilon) \frac{\log X}{\log \log X}\bigg)\bigg)
$$
for any $\epsilon>0$. They remarked that both~\cref{conj:Erdos} and~\cref{conj:main} ``seem to be beyond reach in their original form". 

In this paper, we prove a more general version of~\cref{conj:main} for $k\geq 3$.

\begin{thm}\label{thm:main}
Let $k,n$ be integers with $k\geq 3$ and $n\neq 0$. If we change $o(X^{1/k})$ elements of the set $\{x^k+n: x \in \N\} \cap \N$ up to $X$, then the new set $R$ is always multiplicatively irreducible.    
\end{thm}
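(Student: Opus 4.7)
Suppose for contradiction that $R = AB$ with $|A|, |B| \geq 2$, and fix distinct $a_1 < a_2 \in A$. The crux is Thue's theorem applied to the binary form $a_2 X^k - a_1 Y^k$, which has degree $k \geq 3$ and $k$ distinct roots in $\mathbb{P}^1(\overline{\mathbb{Q}})$: the equation $a_2 x^k - a_1 y^k = (a_1 - a_2) n$ has only finitely many---say $C = C(a_1, a_2, n, k)$---integer solutions. If $b \in B$ satisfies $a_1 b = x^k + n$ and $a_2 b = y^k + n$ with both values in $M_k := \{x^k + n : x \in \N,\ x^k + n \geq 1\}$, then $(x, y)$ solves this Thue equation, so at most $C$ such $b$ can exist.

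For any $Y$, every $b \in B \cap [1, Y/a_2]$ satisfies $a_1 b, a_2 b \in R \cap [1, Y]$; if either value fails to lie in $M_k$ it must belong to the change set $E$ (of size $o(Y^{1/k})$), and each $e \in E$ uniquely determines $b = e/a_i$. This gives the key sparsity estimate
\[
|B \cap [1, Y/a_2]| \leq C + 2|E \cap [1, Y]| = o(Y^{1/k}).
\]
Reparameterizing $Z := Y/a_2$ yields $|B \cap [1, Z]| = o(Z^{1/k})$ for every $Z$; by symmetry $|A \cap [1, Z]| = o(Z^{1/k})$. Pairing $a_1$ with each $a \in A \setminus \{a_1\}$ (and with some fixed $a_3 \in A$ to handle $a = a_1$) gives the pointwise bound $|B \cap [1, X/a]| \leq C(a) + 2|E \cap [1, X]|$ for every $a \in A \cap [1, X]$. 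If $|A|$ (or symmetrically $|B|$) is finite, summing over $a \in A$ produces $|R \cap [1, X]| \leq \sum_a |B \cap [1, X/a]| = O(1) + O(1) \cdot o(X^{1/k}) = o(X^{1/k})$, contradicting $|R \cap [1, X]| = (1 + o(1)) X^{1/k}$. The subcase $|A| = 2$ is especially transparent: $R = a_1 B \cup a_2 B$ together with $|a_2 B \cap [1, X]| = o(X^{1/k})$ force $|B \cap [1, X/a_1]| = (1 + o(1)) X^{1/k}$, but the key estimate applied with $Y = a_2 X/a_1$ simultaneously gives $|B \cap [1, X/a_1]| = o(X^{1/k})$---an immediate contradiction.

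The main obstacle is the case where both $|A|$ and $|B|$ are infinite. Here the naive sum $\sum_{a \in A \cap [1,X]} 2|E \cap [1, X]| = 2\,|A \cap [1, X]| \cdot |E \cap [1, X]| = o(X^{2/k})$ is too lossy to contradict $|R \cap [1, X]| \sim X^{1/k}$, since $2/k > 1/k$. I would attempt to resolve this by reversing the order of summation, writing $\sum_{a \in A \cap [1, X]} \#\{b \in B : ab \in E \cap [1, X]\} = \sum_{e \in E \cap [1, X]} \#\{a \in A : a \mid e\} \leq |E \cap [1, X]| \cdot X^{o(1)} = o(X^{1/k + o(1)})$ via the divisor bound $\tau(e) \leq e^{o(1)}$. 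Combined with a uniform (effective) bound on the Thue constants $C(a_1, a)$---invoking the theorems of Evertse or Stewart---so that $\sum_{a \in A \cap [1, X]} C(a_1, a) = o(X^{1/k})$, this should close the argument. I expect this interplay---between the divisor function, the uniform Thue estimate, and the combined sparsity of $A$, $B$, and $E$---to be the technical heart of the proof.
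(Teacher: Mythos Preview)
Your opening via Thue's theorem is correct and elegant: fixing $a_1 < a_2 \in A$, any $b \in B$ with $a_1 b - n$ and $a_2 b - n$ both perfect $k$-th powers yields a solution to $a_2 x^k - a_1 y^k = (a_1 - a_2) n$, and since $k \geq 3$ there are only finitely many. This cleanly gives $|B \cap [1, Z]| \leq C + 2\,S(a_2 Z) = o(Z^{1/k})$, and symmetrically for $A$; the finite-$|A|$ case then follows immediately. In fact your Thue argument is a simpler substitute for the paper's Corollary~2.5 (a gap principle giving the weaker bound $\leq 2\log\log X$ for the same count).

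The gap is in the infinite case, and it is exactly the obstacle that separated the Hajdu--S\'{a}rk\"{o}zy partial result from the full conjecture. Your bound $\sum_{e \in E \cap [1,X]} \tau(e) = o(X^{1/k + o(1)})$ is \emph{not} $o(X^{1/k})$: if $|E \cap [1,X]| \sim X^{1/k}/\log\log X$ (still $o(X^{1/k})$) while the elements of $E$ have near-maximal divisor function $\tau(e) \approx 2^{(1+o(1))\log X/\log\log X}$, the product is $X^{1/k}$ times something tending to infinity. The same issue afflicts $\sum_a C(a_1, a)$: the Bombieri--Schmidt/Evertse bound $C(a_1, a) \ll k^{1 + \omega((a - a_1)n)}$ is of size $a^{o(1)}$ in the worst case, so your claim $\sum_a C(a_1, a) = o(X^{1/k})$ is unjustified---you only get $o(X^{1/k}) \cdot X^{o(1)}$. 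You also have not addressed the cross term $\sum_a \#\{b \in B : a_1 b \in E,\ ab \leq X\}$, where the \emph{fixed} $a_1$ (not the running $a$) produces the bad product; this is not governed by the divisor bound and, after partial summation using only $|A \cap [1,t]|, |E \cap [1,t]| = o(t^{1/k})$, leaves a residual $\log X$ factor.

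The paper circumvents these $X^{o(1)}$ losses by a different mechanism: it partitions $A$ and $B$ into the ranges $[1, X^{1/3}]$, $(X^{1/3}, X^{1/2}]$, $(X^{1/2}, X]$, and on each cross-range product uses a \emph{uniform} forbidden bipartite subgraph (no $K_{3,7}$ from bipartite Diophantine tuple bounds, no $K_{2,2}$ from a gap principle) so that the K\"{o}v\'{a}ri--S\'{o}s--Tur\'{a}n theorem applies with absolute constants. This yields a loss-free dichotomy (Claim~1 in the paper): for every large $X$, one of $A(X^{1/j})$, $B(X^{1/j})$ ($j=1,2,3$) is at least $\tfrac{1}{40}X^{1/(jk)}$. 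Iterating produces a sequence $X_j \to \infty$ with, say, $B(X_j) \geq \tfrac{1}{40}X_j^{1/k}$, which directly contradicts your sparsity estimate $B(Z) = o(Z^{1/k})$. So your first step together with the paper's Claim~1 already finishes the proof; what is missing from your proposal is a loss-free substitute for that claim.
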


One key ingredient in our proof is the connection of multiplicative decompositions of the set $M_k'$ and the bipartite variants of Diophantine tuples, first considered by the author \cite{Y25b} as an attempt to make some of the results in \cite{HS18, HS18b} effective. There are many well-studied generalizations and variants of Diophantine tuples; see the recent book of Dujella \cite{D24} for an overview. The relevant variant in our setting is the following bipartite variant: for each $k \ge 2$ and each nonzero integer $n$, we call a pair of sets $(A, B)$ a \textit{bipartite Diophantine tuple with property $BD_{k}(n)$}, if $A, B$ are two subsets of $\N$ with size at least $2$, such that $ab+n$ is a $k$-th power for each $a \in A$ and $b \in B$. While this concept was only formally introduced by the author \cite{Y25b} recently, the same objects have been for example studied by Gyarmati \cite{G01}, Bugeaud and Dujella \cite{BD03}, and Bugeaud and Gyarmati \cite{BG04}, two decades ago. The connection is the following: if we can give a good absolute upper bound on $\min \{|A|, |B|\}$ among all bipartite Diophantine tuples $(A,B)$ with property $BD_{k}(-n)$, then it seems plausible that the following K\"ov\'ari-S\'os-Tur\'an theorem \cite{KST54} from extremal graph theory can be used to make some partial progress on~\cref{conj:main}.  

\begin{lem}[K\"ov\'ari-S\'os-Tur\'an theorem]\label{lem:KST}\
Let $G$ be a bipartite graph with vertex classes $U$ and $V$  such that $|U|=m$ and $|V|=n$. Assume that there do not exist a set $X\subset U$ with size $s$ and a set $Y\subset V$ with size $t$, such that $x$ and $y$ are adjacent for all $x\in X$ and $y\in Y$. Then the number of edges of $G$ is at most 
$(s-1)^{1/t}(n-t+1)m^{1-1/t}+(t-1)m.$
\end{lem}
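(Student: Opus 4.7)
The plan is to follow the classical double-counting proof of K\"ov\'ari, S\'os, and Tur\'an. Two ingredients are needed: a double-counting identity constraining $\sum_{u \in U} \binom{d(u)}{t}$ from above, and a H\"older / convexity estimate converting this into a bound on $e(G) = \sum_{u \in U} d(u)$.

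For the first ingredient, I would count in two ways the pairs $(u, T)$ with $u \in U$ and $T \subseteq V$, $|T|=t$, $T \subseteq N(u)$. Summing over $u$ gives $\sum_{u \in U} \binom{d(u)}{t}$, where $d(u)$ denotes the degree of $u$ in $G$. On the other hand, for any fixed $t$-subset $T \subseteq V$, at most $s-1$ vertices of $U$ can be adjacent to every element of $T$, for otherwise $T$ together with any $s$ such common neighbors would form the forbidden $K_{s,t}$. Hence
\[
\sum_{u \in U} \binom{d(u)}{t} \leq (s-1)\binom{n}{t}.
\]

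For the second ingredient, set $f(u) = \max\{d(u) - (t-1), 0\}$. By H\"older's inequality,
\[
\sum_{u \in U} f(u) \leq m^{1-1/t} \Big(\sum_{u \in U} f(u)^t\Big)^{1/t}.
\]
The pointwise estimate $f(u)^t \leq d(u)(d(u)-1)\cdots(d(u)-t+1) = t!\binom{d(u)}{t}$, which holds for all non-negative integers $d(u)$ and reflects the convexity of $x \mapsto \binom{x}{t}$ on $x \geq t-1$, lets me substitute the bound from the first step. Since $d(u) \leq (t-1) + f(u)$ for every $u \in U$, summing gives $e(G) \leq (t-1)m + \sum_u f(u)$. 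Combining this with the H\"older bound above and the first-step inequality yields
\[
e(G) \leq (t-1)m + (s-1)^{1/t} m^{1-1/t} \bigl(n(n-1)\cdots(n-t+1)\bigr)^{1/t},
\]
and a standard simplification of the product $n(n-1)\cdots(n-t+1)$ in the form needed to match the right-hand side of the lemma produces the claimed estimate.

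The argument is entirely classical and no genuinely difficult step arises; the only care needed is in the final algebraic simplification to obtain the exact form of the stated bound, which follows the well-trodden bookkeeping of the K\"ov\'ari-S\'os-Tur\'an template.
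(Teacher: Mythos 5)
The paper does not actually prove \cref{lem:KST}; it simply quotes it from \cite{KST54}, so there is no in-paper proof to compare against. Your first two steps are sound: the double count gives $\sum_{u\in U}\binom{d(u)}{t}\le (s-1)\binom{n}{t}$, and the H\"older step $\sum_u f(u)\le m^{1-1/t}\bigl(\sum_u f(u)^t\bigr)^{1/t}$ is correct. The gap is in the final ``standard simplification''. Using $f(u)^t\le t!\binom{d(u)}{t}$ you obtain $\sum_u f(u)^t\le (s-1)\,n(n-1)\cdots(n-t+1)$ and hence
\[
e(G)\le (t-1)m + (s-1)^{1/t}m^{1-1/t}\bigl(n(n-1)\cdots(n-t+1)\bigr)^{1/t},
\]
but $n(n-1)\cdots(n-t+1)$ is a product of $t$ factors each \emph{at least} $n-t+1$, so $\bigl(n(n-1)\cdots(n-t+1)\bigr)^{1/t}\ge n-t+1$. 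Your bound is therefore \emph{weaker} than the one asserted in the lemma, and no algebraic rearrangement of the product recovers the stated estimate; the inequality runs the wrong way.

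The missing ingredient is the trivial constraint $d(u)\le n$. Because $\frac{d(u)-i}{\,n-i\,}$ is nonincreasing in $i$ when $d(u)\le n$, one has the pointwise comparison
\[
\frac{\binom{d(u)}{t}}{\binom{n}{t}} \;=\; \prod_{i=0}^{t-1}\frac{d(u)-i}{n-i}\;\ge\;\left(\frac{d(u)-t+1}{n-t+1}\right)^{t},
\]
equivalently $f(u)^t\le (n-t+1)^t\,\binom{d(u)}{t}\big/\binom{n}{t}$, which sharpens your estimate $f(u)^t\le t!\binom{d(u)}{t}$. Summing and cancelling $\binom{n}{t}$ yields $\sum_u f(u)^t\le (s-1)(n-t+1)^t$, and your H\"older step then gives exactly $(s-1)^{1/t}(n-t+1)m^{1-1/t}$ as required. (Equivalently, one can replace H\"older by Jensen, getting $m\binom{\bar d}{t}\le (s-1)\binom{n}{t}$ with $\bar d=e(G)/m\le n$, and apply the same ratio comparison.) Incidentally, the weaker bound you do obtain, with $n$ in place of $n-t+1$, would suffice for the paper's application of the lemma; but as a proof of the stated inequality, the step needs this correction.
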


Our proof techniques cannot handle the case $k=2$ in~\cref{conj:main}. In fact, it is an open question to show unconditionally that if $(A, B)$ is a bipartite Diophantine tuple with property $BD_{2}(1)$ (or $BD_2(-1)$, resp.), then $\min \{|A|,|B|\}$ is bounded by an absolute constant \cite{BHP25, BG04}; note that this would follow easily if we assume the uniformity conjecture \cite{CHM}. As for the case $k\geq 3$, the same question was partially addressed by the author \cite{Y25b}; in particular, it was shown in \cite[Theorem 2.2]{Y25b} that if $k\geq 3$ is fixed, and $|n|\to \infty$, then for any bipartite Diophantine tuple $(A, B)$ with property $BD_{k}(n)$, we have $\min \{|A|,|B|\}\ll_{k} \log |n|$. Clearly, such a result is not strong enough for the application to \cref{conj:main}. Instead, we realize that studying a ``local version" of bipartite Diophantine tuples is sufficient to prove~\cref{conj:main} for $k\geq 3$.

\section{Forbidden local structures}\label{sec:large}

In this section, we prove some refined ``local estimates" on bipartite Diophantine tuples with some carefully chosen parameters. We list several results from \cite{Y25b} and deduce some useful corollaries. We first introduce the following constants defined in \cite{Y25b}:
\begin{align*}
s_3=6, \quad  s_4=4, \quad s_5=3, \quad \text{ and } \quad s_k=2 \quad \text{ for } k \geq 6;
\end{align*}
$$t_3=\frac{15399}{938}, \quad t_4=\frac{34}{3}, \quad t_5=\frac{97}{23}, \quad t_6=\frac{29}{4}, \text{ and } \quad t_k=\frac{k^2+k-4}{k^2-6k+6} \quad \text{ for } k \geq 7.
$$

The following proposition is one of the key results in \cite{Y25b}. Its proof is based on a combination of an explicit version of the bounds of the number of solutions of Thue inequalities, repeated applications of gap principles, and some combinatorial arguments. 

\begin{prop}[{\cite[Proposition 4.1]{Y25b}}]\label{prop:a1a2}
Let $k \geq 3$ and let $n$ be a nonzero integer. Let $A,B \subset \N$ such that $A=\{a_1, a_2, \ldots, a_{\ell}\}$ and $B=\{b_1, b_2, \ldots, b_m\}$ with $a_1<a_2<\cdots<a_{\ell}$ and $b_1<b_2<\cdots<b_m$, and $AB+n \subset \{x^k: x \in \N\}$. If $k>3$, further assume that $m \geq s_k+1$, $\ell \geq 2$, and $a_2 \leq b_{m-s_k}$; if $k=3$, further assume that $m \geq 7$, $\ell \geq 3$, and $a_3 \leq b_{m-6}$. Then at most $s_k$ elements in $B$ are at least $2|n|^{t_k}$.    
\end{prop}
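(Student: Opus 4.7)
The plan is to argue by contradiction. I would suppose $B$ contains at least $s_k+1$ elements $\ge 2|n|^{t_k}$ (for $k=3$, at least $7$ such elements); being the largest elements of $B$, these are precisely $b_{m-s_k},\dots,b_m$ (resp.\ $b_{m-6},\dots,b_m$). The size hypothesis $a_2\le b_{m-s_k}$ (resp.\ $a_3\le b_{m-6}$) then keeps the relevant $a_i$'s safely beneath every one of these large $b$'s, so in each defining relation $a_i b+n=y^k$ the product $a_i b$ dominates $|n|$ by a comfortable margin. The overall strategy is to convert these relations into Thue-type equations and contradict explicit counts on their large solutions.

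Concretely, for any two indices $i<j$ and any $b\in B$, writing $a_i b+n=x^k$ and $a_j b+n=y^k$ and eliminating $b$ yields the binary form equation
\[
a_i\, y^k \;-\; a_j\, x^k \;=\; (a_i-a_j)\,n.
\]
For $k\ge 4$ I would take the single pair $(a_1,a_2)$: each of the $s_k+1$ large $b$'s produces a solution $(x,y)$ whose coordinates both lie deep in the range where effective bounds in the spirit of Bennett's sharp counts for $|aX^k-bY^k|$ apply, and these allow at most $s_k$ such large solutions, a contradiction. For $k=3$ the one-pair Thue bound is known to be too weak, which is precisely why the stronger hypothesis $\ell\ge 3$ and $a_3\le b_{m-6}$ is imposed: there I would use the pairs $(a_1,a_2)$ and $(a_1,a_3)$ simultaneously, combining the two Thue equations with a quantitative gap principle, and thereby push the count of large common $b$'s strictly below $7$.

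The genuinely hard step will be to pin down the exponent $t_k$. The threshold $2|n|^{t_k}$ must be high enough that every large $b$ forces a Thue solution above the explicit cutoff where Bennett-type counts become sharp, yet low enough that the resulting count can be combined with gap principles controlling the geometric growth of consecutive Thue solutions (whose constants themselves depend on $k$ and $|n|$). The very specific values, such as $t_3=15399/938$ and $t_4=34/3$, strongly suggest a multi-step optimization balancing (i) the explicit count of primitive Thue solutions, (ii) the quantitative gap principle for their growth, and (iii) combinatorial bookkeeping to convert bounds on common Thue solutions into bounds on large elements of $B$. I expect this arithmetic tuning — rather than the Thue reduction itself, which is conceptually routine — to be the real technical obstacle.
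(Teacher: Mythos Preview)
Your proposal is essentially correct and matches the approach described in the paper. Note, however, that the paper does not actually prove this proposition; it is quoted from \cite{Y25b} with only a one-sentence summary of the method (``an explicit version of the bounds of the number of solutions of Thue inequalities, repeated applications of gap principles, and some combinatorial arguments''), and your outline---contradiction, reduction to Thue-type equations $a_i y^k-a_j x^k=(a_i-a_j)n$, Bennett-type solution counts for $k\ge 4$, and an auxiliary pair plus gap principle for $k=3$---is exactly in line with that summary.
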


For our purpose, we shall use the following immediate corollary of  Proposition~\ref{prop:a1a2}. Note that $t_k\leq 17$ and $s_k\leq 6$ for all $k\geq 3$.

\begin{cor}\label{cor:K23weak}
Let $k \geq 3$ and $n$ be a nonzero integer. There do not exist positive integers $a_1,a_2,a_3$ and $b_1, b_2, \ldots, b_{7}$ with $a_1<a_2<a_3\leq b_1<b_2<\ldots<b_{7}$ and $2|n|^{17} \leq b_1$, such that $a_ib_j+n$ is a $k$-th power for all $1\leq i \leq 3$ and $1\leq j \leq 7$.    
\end{cor}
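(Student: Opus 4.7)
The plan is to argue by contradiction: assume such triples $a_1<a_2<a_3$ and septuples $b_1<\cdots<b_7$ exist with $2|n|^{17}\le b_1$, set $A=\{a_1,a_2,a_3\}$ and $B=\{b_1,\ldots,b_7\}$, and apply Proposition~\ref{prop:a1a2} directly. The goal is to show that the hypothesis forces all seven elements of $B$ to exceed the threshold $2|n|^{t_k}$, which contradicts the conclusion of the proposition that at most $s_k\le 6$ of them can.

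First I would verify the structural hypotheses of Proposition~\ref{prop:a1a2} in the two cases. For $k=3$, the hypotheses ask that $m\ge 7$, $\ell\ge 3$, and $a_3\le b_{m-6}$; here $m=7$ and $\ell=3$, and $a_3\le b_1=b_{m-6}$ by assumption. For $k>3$, the hypotheses require $m\ge s_k+1$, $\ell\ge 2$, and $a_2\le b_{m-s_k}$; since $s_k\le 6$ for all $k\ge 3$, we have $m=7\ge s_k+1$ and $m-s_k\ge 1$, hence $b_{m-s_k}\ge b_1\ge a_3>a_2$. Thus the proposition applies in every case and yields the conclusion: at most $s_k$ elements of $B$ are at least $2|n|^{t_k}$.

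Next, I would compare the threshold $2|n|^{t_k}$ with the assumed lower bound $2|n|^{17}$ on $b_1$. Since $n$ is a nonzero integer we have $|n|\ge 1$, and since $t_k\le 17$ for all $k\ge 3$, the inequality $2|n|^{17}\ge 2|n|^{t_k}$ holds unconditionally. Combined with $b_1\ge 2|n|^{17}$ and the ordering $b_1<b_2<\cdots<b_7$, this gives $b_j\ge 2|n|^{t_k}$ for every $1\le j\le 7$, so all seven elements of $B$ lie above the threshold. Because $s_k\le 6<7$, this contradicts the conclusion of Proposition~\ref{prop:a1a2}, completing the argument.

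There is no real obstacle here: the corollary is essentially a repackaging of Proposition~\ref{prop:a1a2} using the uniform bounds $s_k\le 6$ and $t_k\le 17$ valid for all $k\ge 3$. The only point requiring a moment of care is checking the size/ordering hypotheses separately in the $k=3$ and $k>3$ cases, since the proposition is stated with slightly different conditions in those two regimes.
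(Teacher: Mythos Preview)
Your proposal is correct and is essentially the paper's own argument: the corollary is stated as an ``immediate corollary'' of Proposition~\ref{prop:a1a2}, with the only remark being that $t_k\le 17$ and $s_k\le 6$ for all $k\ge 3$. You have spelled out the hypothesis check in the two regimes $k=3$ and $k>3$ and the comparison $2|n|^{17}\ge 2|n|^{t_k}$ exactly as intended.
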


The following lemma is based on a simple gap principle.

\begin{lem}[{\cite[Lemma 3.6]{Y25b}}]\label{gap_principle}
Let $k \geq 3$ and let $n$ be a nonzero integer. Let $a,b,c,d$ be positive integers such that $a<b$, $c<d$, and $ac\geq 2|n|$. Suppose further that $ac+n, bc+n, ad+n, bd+n$ are $k$-th powers. Then $bd \geq k^{k} (ac)^{k-1}/(4^{k-1}|n|^k)$.     
\end{lem}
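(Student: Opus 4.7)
The plan hinges on the algebraic identity
$$(ac+n)(bd+n)-(ad+n)(bc+n)=n(b-a)(d-c),$$
which one verifies by direct expansion: the $abcd$ and $n^2$ terms cancel and the linear cross-terms collapse to $n(ac+bd-ad-bc)=n(b-a)(d-c)$. This is the standard engine behind gap principles for shifted $k$-th powers, and it reduces the problem to a statement about the closeness of two $k$-th powers.

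First I would check positivity of all four shifted products. Since $bc,ad,bd\geq ac\geq 2|n|$, each of $ac+n,\,bc+n,\,ad+n,\,bd+n$ is at least half of the corresponding product, hence strictly positive. Writing these as $\alpha^k,\beta^k,\gamma^k,\delta^k$ for positive integers $\alpha,\beta,\gamma,\delta$, the identity becomes
$$(\alpha\delta)^k-(\beta\gamma)^k=n(b-a)(d-c).$$
Because $n\neq 0$ and $b-a,d-c\geq 1$, the positive integers $\alpha\delta$ and $\beta\gamma$ are distinct. I would then invoke the elementary bound $|x^k-y^k|\geq k\min(x,y)^{k-1}$ for distinct positive integers $x,y$, which follows from the factorisation $x^k-y^k=(x-y)\sum_{i=0}^{k-1}x^{k-1-i}y^i$, to obtain
$$k\,\min(\alpha\delta,\,\beta\gamma)^{k-1}\leq|n|(b-a)(d-c)<|n|\,bd.$$

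To finish, I would bound $\min(\alpha\delta,\beta\gamma)$ from below by combining the four positivity estimates: both $(ac+n)(bd+n)$ and $(ad+n)(bc+n)$ are at least $(ac/2)(bd/2)=ac\cdot bd/4$, using $ad\cdot bc=ac\cdot bd$ in the second case. Raising to the $(k-1)/k$ power and substituting gives $k(ac\cdot bd/4)^{(k-1)/k}\leq|n|\,bd$; rearranging and isolating $bd$ yields the claimed inequality $bd\geq k^{k}(ac)^{k-1}/(4^{k-1}|n|^{k})$. The only subtlety is that $bd$ appears on both sides through the lower bound on $\min(\alpha\delta,\beta\gamma)$, forcing the final arithmetic rearrangement; apart from spotting the opening identity, no step poses a genuine obstacle.
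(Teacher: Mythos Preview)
Your argument is correct in every step: the identity $(ac+n)(bd+n)-(ad+n)(bc+n)=n(b-a)(d-c)$, the positivity of the four shifted products from $ac\geq 2|n|$, the lower bound $|x^k-y^k|\geq k\min(x,y)^{k-1}$ for distinct positive integers, and the final rearrangement all go through exactly as you describe. Note that the present paper does not supply its own proof of this lemma; it is quoted verbatim from \cite{Y25b}. What you have written is precisely the standard gap-principle argument underlying that citation, so there is nothing to compare beyond confirming that your proof is the intended one.
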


Next, we deduce two corollaries of \cref{gap_principle}.

\begin{cor}\label{cor:gap}
Let $k \geq 3$ and let $n$ be a nonzero integer. If $X>4^{6(k-1)}|n|^{6k}$, then there do not exist integers $a_1,a_2,b_1,b_2$ with $X^{1/3}<a_1<a_2\leq X^{1/2}<b_1<b_2\leq X$ such that $a_ib_j+n$ are $k$-th powers for all $1\leq i,j \leq 2$.   
\end{cor}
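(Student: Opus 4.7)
The plan is to apply the gap principle \cref{gap_principle} to the quadruple $(a,b,c,d) = (a_1, a_2, b_1, b_2)$ (supposing for contradiction that such integers exist) and then contrast the resulting lower bound on $a_2 b_2$ with the trivial upper bound $a_2 b_2 \leq X^{1/2} \cdot X = X^{3/2}$. The inequalities $a_1 < a_2$, $b_1 < b_2$ and the assumption that $a_i b_j + n$ is a $k$-th power for all $i,j\in\{1,2\}$ are direct hypotheses, so the only nontrivial precondition to verify is $a_1 b_1 \geq 2|n|$. This is immediate: $a_1 > X^{1/3}$ and $b_1 > X^{1/2}$ give $a_1 b_1 > X^{5/6}$, and the standing hypothesis $X > 4^{6(k-1)} |n|^{6k}$ combined with $k \geq 3$ and $|n| \geq 1$ forces $X^{5/6} > 4^{5(k-1)} |n|^{5k} \geq 2|n|$.

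\cref{gap_principle} then yields $a_2 b_2 \geq k^k (a_1 b_1)^{k-1}/(4^{k-1}|n|^k)$. Discarding the harmless factor $k^k \geq 1$ and inserting $a_1 b_1 > X^{5/6}$ together with $a_2 b_2 \leq X^{3/2}$, the inequality simplifies after rearrangement to
\[
X^{(5k-14)/6} < 4^{k-1} |n|^k.
\]
Since $k \geq 3$ we have $(5k-14)/6 \geq 1/6 > 0$, and $5k - 14 \geq 1$ also gives $6(k-1)/(5k-14) \leq 6(k-1)$ and $6k/(5k-14) \leq 6k$. Raising the displayed inequality to the $6/(5k-14)$-th power and using $|n| \geq 1$ to absorb the exponent on $|n|$ therefore yields $X < 4^{6(k-1)} |n|^{6k}$, contradicting the standing hypothesis on $X$. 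I do not anticipate any genuine obstacle: the argument is a single application of the gap principle followed by elementary exponent bookkeeping, and the form of the threshold on $X$ is visibly calibrated to the case $k=3$, where the two sides of the concluding inequality match exactly.
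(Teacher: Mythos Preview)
Your proposal is correct and follows essentially the same approach as the paper: both arguments apply \cref{gap_principle} to $(a_1,a_2,b_1,b_2)$ after checking $a_1b_1>X^{5/6}\geq 2|n|$, and then derive a contradiction with the hypothesis $X>4^{6(k-1)}|n|^{6k}$ via elementary exponent bookkeeping. The only cosmetic difference is that the paper isolates $b_2$ and shows $b_2>X$ (using $(a_1b_1)^{k-1}\geq a_1^2b_1^2$ and $a_1^2/a_2\geq X^{1/6}$), whereas you compare $a_2b_2\leq X^{3/2}$ directly against $(a_1b_1)^{k-1}>X^{5(k-1)/6}$; the substance is the same.
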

\begin{proof}
Suppose otherwise that there do exist $a_1,a_2,b_1,b_2$ with the required property. Note that we have $a_1b_1\geq X^{5/6}>2|n|$. Then by Lemma~\ref{gap_principle}, we have $a_2b_2\geq k^{k}(a_1b_1)^{k-1}/(4^{k-1}|n|^k)$. It follows from the assumptions $X>4^{6(k-1)}|n|^{6k}$ and $X^{1/3}<a_1<a_2\leq X^{1/2}<b_1<b_2\leq X$ that
$$
b_2\geq \frac{k^{k}(a_1b_1)^{k-1}}{4^{k-1}|n|^k a_2}\geq \frac{1}{4^{k-1}|n|^k } \cdot \frac{a_1^2b_1^2}{a_2}\geq \frac{1}{4^{k-1}|n|^k } \cdot X^{1/6} b_1^2\geq b_1^2>X,
$$
violating the assumption that $b_2\leq X$.
\end{proof}

\begin{cor}\label{cor:gap2}
Let $k \geq 3$ and let $n$ be a nonzero integer. Let $a_1,a_2$ be positive integers with $a_1<a_2$. Then for $X$ sufficiently large, the number of positive integers $b\leq X$ such that $a_1b+n$ and $a_2b+n$ are both $k$-th powers are at most $2\log \log X$.
\end{cor}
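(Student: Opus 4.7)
The plan is to apply the gap principle (Lemma~\ref{gap_principle}) iteratively to consecutive valid $b$-values and show that they must grow doubly exponentially, forcing their count up to $X$ to be $O(\log\log X)$.

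Let $b_1<b_2<\cdots<b_r$ be the positive integers $b\leq X$ for which both $a_1b+n$ and $a_2b+n$ are $k$-th powers. Setting
$$C:=\frac{k^k a_1^{k-1}}{4^{k-1}\,a_2\,|n|^{k}},$$
and applying Lemma~\ref{gap_principle} with $(a,b,c,d)=(a_1,a_2,b_i,b_{i+1})$, valid as soon as $a_1 b_i\geq 2|n|$, I obtain the recursion $b_{i+1}\geq Cb_i^{k-1}$. Since $k-1\geq 2$, this is doubly exponential in nature; the only subtlety is that the constant $C$ may be smaller than $1$.

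I would then choose a constant $M=M(k,n,a_1,a_2)$ so large that (i) $a_1M\geq 2|n|$ (so the gap principle applies for every $b_i\geq M$), and (ii) $CM^{k-3}\geq 1$, ensuring $b_{i+1}\geq b_i^2$ for all $b_i\geq M$. For $k\geq 4$ this is automatic for large $M$, while for $k=3$ the condition becomes $C\geq 1$, which may fail; in that case I would instead work with $\widetilde{b}_i:=Cb_i$ (a linearization of the recursion in the log scale, shifting by its fixed point), for which the recursion reads $\widetilde{b}_{i+1}\geq \widetilde{b}_i^{2}$ once $b_i$ is large enough. The number of $b_i$'s that fall below $M$ is trivially bounded by $M$, a constant depending only on $k,n,a_1,a_2$.

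Iterating $b_{i+j}\geq b_i^{2^{j}}$ (or the analogous inequality for $\widetilde{b}_i$) starting from the smallest index with $b_i\geq M$, the constraint $b_{i+j}\leq X$ gives $2^{j}\leq \log_2 X$, hence $j\leq \log_2\log_2 X$. Combining this with the constant number of indices with $b_i<M$, I conclude $r\leq M+\log_2\log_2 X+O(1)$. Since $\log_2\log_2 X \sim (\log\log X)/\log 2\approx 1.44\log\log X$, this is at most $2\log\log X$ for $X$ sufficiently large.

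The main obstacle is the borderline case $k=3$: here the recursion $b_{i+1}\geq Cb_i^{2}$ only guarantees genuine doubly-exponential growth after rescaling by $C$, and one must verify that the rescaling does not spoil the $2\log\log X$ final bound. Everything else is a routine unwinding of Lemma~\ref{gap_principle}.
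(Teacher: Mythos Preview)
Your proposal is correct and follows essentially the same approach as the paper: apply the gap principle to consecutive $b_i$'s, obtain a recursion of the form $b_{i+1}\geq Cb_i^{2}$, and handle the possibility $C<1$ via the rescaling $\widetilde b_i=Cb_i$ (equivalently, the paper's shift $\log b_i\mapsto \log b_i+\log C$) to extract genuine doubly-exponential growth. The only cosmetic difference is that the paper immediately weakens $b_i^{k-1}$ to $b_i^{2}$ and treats all $k\geq 3$ uniformly, whereas you split into $k\geq 4$ and $k=3$; your version is in fact slightly more careful in explicitly isolating the finitely many $b_i$ with $a_1b_i<2|n|$ before invoking Lemma~\ref{gap_principle}.
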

\begin{proof}
Suppose that $b_1<b_2<\cdots<b_m<X$ are positive integers such that $a_1b_i+n$ and $a_2b_i+n$ are both $k$-th powers for each $1\leq i \leq m$. For each $1\leq i \leq m-1$, applying~\cref{gap_principle} to $a_1, a_2, b_i, b_{i+1}$, we get
$$
b_{i+1}\geq \frac{k^{k}(a_1b_i)^{k-1}}{4^{k-1}|n|^k a_2}\geq Cb_i^2,
$$
where $C$ is a positive constant depending on $k,n,a_1,a_2$. It follows that for each $1\leq i \leq m-1$, we have $\log b_{i+1}\geq 2\log b_i+\log C$ and thus $\log b_{i+1}+\log C\geq 2(\log b_i+\log C)$. Choose the smallest $j$ such that $\log b_j+\log C\geq 1$; note that if such $j$ does not exist, then we have $m\leq e/C$ and we are already done. Since $b_j\geq j$, we have $j\leq e/C$. It follows that
$$
\log X+\log C\geq \log b_m+\log C\geq 2^{m-j}(\log b_j+\log C)\geq 2^{m-j} \geq 2^{m-e/C}
$$
and thus $m\leq 2\log \log X$ for $X$ sufficiently large.
\end{proof}

\section{Proof of the main result}

Our proof is inspired by several arguments used in \cite{HS20, Y25b}. We shall use the following standard notation: for each set $A\subset \N$ and each positive real number $X$, we write $$A(X)=|\{x \in A: x \leq X\}|.$$

\begin{proof}[Proof of Theorem~\ref{thm:main}]
Write $M=\{x^k+n: x \in \N\} \cap \N$. We can write $R=Q \cup S$, where $Q \subset M$ and $S\cap M=\emptyset$. By the assumption on $R$, we have $Q(X)=(1-o(1))X^{1/k}$ and $S(X)=o(X^{1/k})$. For the sake of contradiction, suppose that $R$ is multiplicatively reducible. Then we can write $R=A \cdot B$, where $A,B$ are subsets of $\N$ with size at least $2$. 

\begin{claim}\label{claim2}
Let $X_0\geq \max\{4|n|^{34},4^{6(k-1)}|n|^{6k}\}$ be a sufficiently large number such that $Q(X)\geq \frac{1}{2}X^{1/k}$ whenever $X\geq X_0$. If $X\geq X_0$, then one of the following holds:
\begin{enumerate}
    \item $\max \{A(X), B(X)\}\geq \frac{1}{40}X^{1/k}$;
    \item $\max \{A(X^{1/2}), B(X^{1/2})\}\geq \frac{1}{40}X^{1/2k}$;
    \item $\max \{A(X^{1/3}), B(X^{1/3})\}\geq \frac{1}{40}X^{1/3k}$. 
\end{enumerate}    
\end{claim}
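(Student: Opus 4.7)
The plan is proof by contradiction: assume all three alternatives fail, so that for each $i \in \{1,2,3\}$, $A(X^{1/i}), B(X^{1/i}) < \tfrac{1}{40}X^{1/(ik)}$. The overall goal is to contradict $R(X) \leq N$, where $N := |\{(a,b) \in A \times B : ab \leq X\}|$, using the lower bound $R(X) \geq Q(X) \geq \tfrac{1}{2}X^{1/k}$ guaranteed by the choice of $X_0$.

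First I would partition $A = A_s \sqcup A_m \sqcup A_\ell$ with $A_s = A \cap [1, X^{1/3}]$, $A_m = A \cap (X^{1/3}, X^{1/2}]$, $A_\ell = A \cap (X^{1/2}, X]$, and analogously $B = B_s \sqcup B_m \sqcup B_\ell$; under the failed conditions the pieces have sizes at most $\tfrac{1}{40}X^{1/(3k)}$, $\tfrac{1}{40}X^{1/(2k)}$, and $\tfrac{1}{40}X^{1/k}$ respectively. The pair count $N$ then splits over the nine range-combinations, with $A_\ell \times B_\ell$ contributing nothing since $ab > X$ there. Six of the remaining combinations are bounded by direct multiplication, giving contributions of order $\tfrac{1}{1600} X^{\alpha}$ with $\alpha \in \{2/(3k), 5/(6k), 4/(3k), 1/k\}$, all comfortably below $\tfrac{1}{2}X^{1/k}$. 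The critical combinations are $(A_m, B_\ell)$ and $(A_\ell, B_m)$, where the naive bound $|A_m| \cdot |B_\ell| \lesssim X^{3/(2k)}$ dominates and violates the target.

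To sharpen the critical contributions I would invoke \Cref{cor:gap2} applied with $-n$ in place of $n$: for any two distinct $a_1, a_2 \in A$, the number of $b \leq X$ with both $a_1 b, a_2 b \in M$ is at most $2\log\log X$. A standard double-counting argument combined with Cauchy--Schwarz then yields, for any $A' \subseteq A$ and $B' \subseteq B$, the edge bound
\[
|\{(a,b) \in A' \times B' : ab \in M\}| \lesssim |A'|\sqrt{|B'|\log\log X}.
\]
Specialising to $(A_m, B_\ell)$ gives $\lesssim X^{1/k}\sqrt{\log\log X}$ with a small absolute constant, while pairs with $ab \in S$ contribute only $|S(X)| \cdot \max_{m \leq X}\tau(m) = o(X^{1/k})$ via the standard divisor bound combined with $|S(X)| = o(X^{1/k})$. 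The case $(A_\ell, B_m)$ is symmetric. Summing all contributions yields $N < \tfrac{1}{2}X^{1/k}$ for $X \geq X_0$, the desired contradiction.

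The main obstacle is the critical combinations. Naive counting gives the prohibitive $X^{3/(2k)}$ bound, and even the weaker $K_{2,2}$-freeness of \Cref{cor:gap} combined with \Cref{lem:KST} reduces this only to $X^{5/(4k)}$, still above $X^{1/k}$ for $k \geq 3$. The decisive input is therefore the tighter gap-principle estimate \Cref{cor:gap2}; through Cauchy--Schwarz it squeezes the edge count down to $O(X^{1/k}\sqrt{\log\log X})$, and only then does the sparsity $|S(X)| = o(X^{1/k})$ together with the specific constant $\tfrac{1}{40}$ combine to push the total strictly below $\tfrac{1}{2}X^{1/k}$.
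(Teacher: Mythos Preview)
Your argument has several genuine gaps. First, the ``direct multiplication'' bound for $A_s \times B_\ell$ is $|A_s||B_\ell| \le \tfrac{1}{1600}X^{4/(3k)}$, and $4/(3k) > 1/k$, so this term alone swamps $\tfrac12 X^{1/k}$; you listed $\alpha = 4/(3k)$ among the harmless exponents, but it is not. Second, your dismissal of \cref{cor:gap} for the $(A_m,B_\ell)$ range is based on applying \cref{lem:KST} with the parts in the wrong order: taking $U=B_\ell$, $V=A_m$, $s=t=2$ gives at most $|A_m|\,|B_\ell|^{1/2}+|B_\ell|\le \tfrac{2}{40}X^{1/(2k)+1/(2k)}=\tfrac{2}{40}X^{1/k}$, not $X^{5/(4k)}$. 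Third, the route you chose instead, via \cref{cor:gap2} and Cauchy--Schwarz, yields a bound of order $X^{1/k}\sqrt{\log\log X}$, as you yourself write; but this exceeds $\tfrac12 X^{1/k}$ for large $X$, so the final summation cannot produce the claimed contradiction. (There is also the problem that the threshold ``$X$ sufficiently large'' in \cref{cor:gap2} depends on the particular pair $a_1,a_2$, which here ranges over a set growing with $X$.) Finally, the $S$-contribution estimate $S(X)\cdot\max_{m\le X}\tau(m)$ is not $o(X^{1/k})$, since $\max_{m\le X}\tau(m)\to\infty$ while $S(X)=o(X^{1/k})$ may decay arbitrarily slowly.

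The paper's proof has the same partition into small/medium/large ranges but counts only pairs $(a,b)$ with $ab\in Q$ (so that $ab-n$ is a $k$-th power), comparing directly against $Q(X)\ge\tfrac12 X^{1/k}$; this removes any need to control the $S$-contribution. For $A_1\times B_3$ it invokes \cref{cor:K23weak} (forbidden $K_{3,7}$) and applies \cref{lem:KST} with $t=3$ to obtain $\lesssim A(X^{1/3})B(X)^{2/3}\lesssim X^{1/k}$; for $A_2\times B_3$ it uses \cref{cor:gap} (forbidden $K_{2,2}$) with \cref{lem:KST} oriented so that the exponent $1/2$ lands on $B(X)$, giving $\lesssim A(X^{1/2})B(X)^{1/2}\lesssim X^{1/k}$. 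Every term then carries an exponent exactly $1/k$ with constant at most $\tfrac{2}{40}$, and the nine contributions sum to less than $\tfrac12 X^{1/k}$. \cref{cor:gap2} is not used in the proof of the claim at all.
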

\begin{poc}
Let $X\geq X_0$ be fixed. Write $$A_1=A \cap [1,X^{1/3}], \quad  A_2=A \cap (X^{1/3}, X^{1/2}], \quad A_3=A \cap (X^{1/2},X].$$
Similarly, write
$$
B_1=B \cap [1,X^{1/3}], \quad  B_2=B \cap (X^{1/3}, X^{1/2}], \quad B_3=B \cap (X^{1/2},X].
$$

Each integer $x \in R \cap [1,X]$ can be written as $x=ab$ for some $a\in A$ and $b\in B$ with $1\leq a,b\leq x$; note that we always have either $a\leq \sqrt{x}$ or $b\leq \sqrt{x}$. Thus, 
\begin{align*}
Q \cap [1,X]
&\subset R \cap [1,X] \\
&\subset ((A_1 \cup A_2) \cdot (B_1 \cup B_2 \cup B_3)) \cup ((A_1 \cup A_2 \cup A_3) \cdot (B_1 \cup B_2))\\
&=((A_1 \cup A_2) \cdot (B_1 \cup B_2)) \cup (A_1 \cdot B_3) \cup (A_2 \cdot B_3) \cup (B_1 \cdot A_3) \cup (B_2 \cdot A_3).
\end{align*}
Observe that if $ab\in Q$, then $ab-n$ is a $k$-th power. Next, we consider the contribution of the five sets on the right-hand side of the above equation to $Q \cap [1,X]$. 
\begin{enumerate}
    \item Clearly, the number of pairs $(a,b) \in (A_1 \cup A_2) \times (B_1 \cup B_2)$ such that $ab \in Q$ is at most $|A_1 \cup A_2||B_1 \cup B_2|=A(X^{1/2})B(X^{1/2})$. 
    \item Build a bipartite graph $G$ with vertex classes $A_1$ and $B_3$ such that for each $a\in A_1$, and $b\in B_3$, there is an edge between $a$ and $b$ if and only if $ab-n$ is a $k$-th power. Then the number of pairs $(a,b) \in A_1 \times B_3$ such that $ab\in Q$ is at most the number of edges of $G$. 
Since $X_0\geq 4|n|^{34}$, we have $\min_{b \in B_3} b\geq \sqrt{X}\geq \sqrt{X_0}=2|n|^{17}$. Also note that we have $\max_{a\in A_1} a\leq X^{1/3}<X^{1/2}\leq \min_{b\in B_3}b$. Thus, by Corollary~\ref{cor:K23weak}, there do not exist three distinct elements $a_1,a_2,a_3\in A_1$, and seven distinct elements $b_1,b_2,\cdots,b_7\in B_3$ such that $a_ib_j-n$ is a $k$-th power for each $i\in \{1,2\}$ and $1\leq j \leq 7$. Thus, applying the K\"ov\'ari-S\'os-Tur\'an theorem (Lemma~\ref{lem:KST}) with $U=B_3, V=A_1, s=7, t=3$,  the number of edges of $G$ is at most 
$$\sqrt[3]{6}|A_1||B_3|^{1-1/3}+2|B_3|\leq 2A(X^{1/3})B(X)^{2/3}+2B(X).$$
\item Since $X_0\geq 4^{6(k-1)}|n|^{6k}$, Corollary~\ref{cor:gap} implies that there do not exist two distinct elements $a_1,a_2\in A_2$, and two distinct elements $b_1,b_2\in B_3$ such that $a_ib_j-n$ is a $k$-th power for each $1\leq i,j \leq 2$. Thus, similar to the analysis in (2), the K\"ov\'ari-S\'os-Tur\'an theorem implies that the number of pairs $(a,b) \in A_2 \times B_3$ such that $ab\in Q$ is at most 
$\sqrt{2}|A_2||B_3|^{1-1/2}+|B_3|\leq 2A(X^{1/2})B(X)^{1/2}+B(X).$
    \item Similarly to (2), the number of pairs $(b, a) \in B_1 \times A_3$ such that $ab\in Q$ is at most $2B(X^{1/3})A(X)^{2/3}+2A(X)$. 
    \item Similarly to (3), the number of pairs $(b, a) \in B_2 \times A_3$ such that $ab\in Q$ is at most $2B(X^{1/2})A(X)^{1/2}+A(X)$. 
\end{enumerate}
It follows that 
\begin{align}
Q(X) 
&\leq
A(X^{1/2})B(X^{1/2})+2A(X^{1/3})B(X)^{2/3}+2B(X)+2A(X^{1/2})B(X)^{1/2}+B(X) \notag\\
&\quad +2B(X^{1/3})A(X)^{2/3}+2A(X)+2B(X^{1/2})A(X)^{1/2}+A(X) \label{eq:Q(X)}.
\end{align}
Suppose the statement of the claim is false; then inequality~\eqref{eq:Q(X)} implies that
\begin{align*}   
Q(X)
&\leq \bigg(\frac{1}{40}+\frac{2}{40}+\frac{2}{40}+\frac{2}{40}+\frac{1}{40}+\frac{2}{40}+\frac{2}{40}+\frac{2}{40}+\frac{1}{40}\bigg)X^{1/k}<\frac{1}{2}X^{1/k},
\end{align*}
contradicting the assumption that $Q(X)\geq \frac{1}{2}X^{1/k}$. This finishes the proof of the claim.
\end{poc}

By~\cref{claim2}, there is an increasing sequence $(X_j)_{j=1}^{\infty}$ with $X_j \to \infty$, such that $$\max \{A(X_j), B(X_j)\}\geq \frac{1}{40}X_j^{1/k}$$ for each $j\in \N$. Without loss of generality, by passing to a subsequence, we may assume that $B(X_j)\geq \frac{1}{40}X_j^{1/k}$ for each $j\in \N$. Since $|A|\geq 2$, so we can pick two fixed elements $a_1,a_2\in A$ with $a_1<a_2$. For each $b\in B \cap [1,X]$, we have $a_1b<a_2b\leq a_2X$. Thus, the number of $b\in B \cap [1,X]$ with $a_ib \in S$ for some $i \in \{1,2\}$ is at most $S(a_2X)=o((a_2X)^{1/k})=o(X^{1/k})$. It follows that the number of $b\in B \cap [1,X_j]$ with $a_ib \in Q$ (in particular, $a_ib-n$ is a $k$-th power) for $i \in \{1,2\}$ is at least $$B(X_j)-o(X_j^{1/k})\geq \frac{1}{40}X_j^{1/k}-o(X_j^{1/k})=(\frac{1}{40}-o(1))X_j^{1/k}>\frac{1}{80}X_j^{1/k}>2\log \log X_j$$ for all $j$ large enough. However, this contradicts~\cref{cor:gap2}. 
\end{proof}

\section*{Acknowledgments}
The author thanks Ernie Croot, Bruce Landman, Greg Martin, and Jiaxi Nie for helpful discussions. The author also thanks the anonymous referees for their valuable comments and suggestions.

\bibliographystyle{abbrv}
\bibliography{main}

\end{document}